\def\({\left(}
\def\){\right)}
\newtheorem{lema}{Lemma}[section]
\newtheorem*{teorema*}{Theorem}
\newtheorem{remark}[lema]{Remark}
\newtheorem{example}[lema]{Example}
\newtheorem{corollary}[lema]{Corollary}
\newtheorem{theorem}[lema]{Theorem}
\newtheorem{proposition}[lema]{Proposition}
\newtheorem{note}[lema]{Note}
\newtheorem{definition}[lema]{Definition}
\hfill \fbox{}}
\hfill \fbox{}}
\def\beq{\begin{equation}}
\def\eeq{\end{equation}}
\def\epsilon{\varepsilon}
\begin{document}

\title{On nano $h$-open sets}

\author{Shallu Sharma}
\address{1-Department of Mathematics, University of Jammu} 
\email{shallujamwal09@gmail.com}

\author{Pooja Saproo}
\address{2-Department of Mathematics, University of Jammu} 
\email{poojasaproo1994@gmail.com}

\author{Naresh Digra}
\address{2-Department of Mathematics, University of Jammu} 
\email{nareshdigra79@gmail.com}

\author{Iqbal Kour}
\address{2-Department of Mathematics, University of Jammu} 
\email{iqbalkour208@gmail.com}

\thanks{}
\keywords{nano $h$-open sets, nano $h$-continuous functions, nano $h$-irresolute functions, nano $h$-totally continuous functions and nano $h$-homeomorphism. }
\date{}

\begin{abstract}
The main aspect of this paper is to introduce a new generalisation of nano open sets namely, nano $h$-open sets. These newly generalised sets serve as the foundation for the definition of nano $h$-continuous functions and some results involving their characterizations are established. Furthermore, the notion of nano $h$-open functions, nano $h$-irresolute functions, nano $h$-totally continuous functions, nano $h$-contra continuous functions and nano $h$-homeomorphism have been put forth. Some properties regarding these functions have been investigated and some remarks related to them have been provided, supported by examples.

\end{abstract}

\footnote {2010 Mathematics Subject Classification: }

\maketitle{}
\section{Introduction}
hivagar and Richard\cite{Thiva 1} proposed the idea of nano topology, which they characterised in terms of approximations(lower and upper) and the boundary region of a subset of the universe using an equivalence relation. The elements of a nano topological space are called the nano-open sets \cite{Thiva 1}. Thivagar and Richard \cite{Thiva 2} explored several weak forms of nano-open sets, including nano $\alpha$-open sets, nano semi-open sets, and nano pre-open sets. Several mathematicians focused on the extensions of nano open sets after Thivagar and Richard's work on nano weakly open sets(see, for instance, \cite{Reva}). In 2021, Abbas \cite{Abbas} presented a new class of open sets in a topological space, called $h$-open sets. Utilizing the idea of $h$-open sets, we generalise nano open sets and introduce a new class of nano open sets in nano topological spaces, namely nano $h$-open sets. Next, some fundamental notions and properties regarding this class has also been put forth and studied in detail.       

\section{Preliminaries}
This section covers some fundamental definitions that will be utilized in the subsequent sections. 

\begin{definition} \cite{Abbas}
	Let $(\mathcal X, \tau)$ be a topological space and $\mathcal B $ a subset of $\mathcal X$. Then $ \mathcal B$ is said to be $h$-open if $\mathcal B \subseteq Int(\mathcal B \cup \mathcal O)$ for every non-empty open set $\mathcal O$ in $\mathcal X$, where $\mathcal O \neq \mathcal X$.
\end{definition}

\begin{definition} \cite{Thiva 1}
	Consider a non-empty finite set of objects $\mathcal U$ called the universe and an equivalence relation $\mathcal R$ on $\mathcal U$. The pair $(\mathcal U, \mathcal R)$ is called as an approximation space. Let $\mathcal X \subseteq \mathcal U$:
	
	1. The lower approximation of $\mathcal X$ with respect to $\mathcal R$ is denoted by $ \mathcal L_{\mathcal R}(\mathcal X)$ and $ \mathcal L_{\mathcal R}( \mathcal X)= \cup_{x \in \mathcal U} \{ \mathcal R(x): \mathcal R(x) \subseteq \mathcal X \}$, where $ \mathcal R(x)$ denotes the equivalence class determined by $x$.
	
	2. The upper approximation of $\mathcal X$ with respect to $\mathcal R$ is denoted by $\mathcal U_ \mathcal R(\mathcal X)$ and $\mathcal U_ \mathcal R(\mathcal X)=\cup_{x \in \mathcal U} \{ \mathcal R(x): \mathcal R(x) \cap \mathcal X \neq \phi \}$.
	
	3. The boundary region of $\mathcal X$ with respect to $\mathcal R$ is denoted by $\mathcal B_{\mathcal R}(\mathcal X)$ and $\mathcal B_{\mathcal R}(X)=\mathcal U_{\mathcal R}(\mathcal X) \setminus \mathcal L_{\mathcal R}(\mathcal X)$.
\end{definition}

According to Pawlak's definitions, $\mathcal X$ is called a rough set if $ \mathcal L_{\mathcal R}(\mathcal X) \neq  \mathcal U_{\mathcal R}(\mathcal X)$. 

\begin{definition} \cite{Thiva 1}
	Consider the universe $\mathcal U$ and an equivalence relation $\mathcal R$ on $\mathcal U$. Then for $\mathcal X \subseteq \mathcal U$, $\tau_{\mathcal R}(\mathcal X)= \{ \phi, \mathcal U, \mathcal L_{\mathcal R}(\mathcal X) , \mathcal U_{\mathcal R}(\mathcal X), \mathcal B_{\mathcal R}(\mathcal X) \}$ is called the nano topology on $\mathcal U$. $(\mathcal U, \tau_{\mathcal R}(\mathcal X))$ is called a nano topological space. The elements of $\tau_{\mathcal R}(\mathcal X) $ are called as nano open sets and the complement of a nano open set is called a nano closed set.
\end{definition}

\begin{definition} \cite{Thiva 1}
	Consider a nano topological space $(\mathcal U, \tau_{\mathcal R}(\mathcal X))$ with respect to $\mathcal X$, where $\mathcal X \subseteq \mathcal U$. Let $ \mathcal B \subseteq \mathcal U$, then\\
	(i) the nano interior of the set $\mathcal B$ is defined as the union of all nano open subsets contained in $ \mathcal B$, and is denoted by $nInt(\mathcal B)$;\\
	(ii) the nano closure of the set $\mathcal B$ is defined as the intersection of all nano closed subsets containing $\mathcal B$, and is denoted by $nCl(\mathcal B)$.
\end{definition}

\begin{note}
	From now onwards, $\mathcal{U}$ and $\mathcal{V}$ are two non-empty universes containing the sets $\mathcal{X}$ and $\mathcal{Y}$ respectively. Also, $\mathcal{U} \slash \mathcal{R}$ and $\mathcal{V} \slash \mathcal{R}^{'}$ denotes the families of equivalence classes by equivalence relations $\mathcal{R}$ and $\mathcal{R}^{'}$ in $\mathcal{U}$ and $\mathcal{V}$, respectively. Further, $(\mathcal{U}, \tau_{\mathcal{R}}(\mathcal{X}))$ and $(\mathcal{V}, \tau_{\mathcal{R}^{'}}(\mathcal{Y}))$ denote two nano topological spaces with respect to $\mathcal{X}$ and $\mathcal{Y}$, respectively. 
\end{note}

\begin{definition} 
	Consider two nano topological spaces $(\mathcal{U}, \tau_{\mathcal{R}}(\mathcal{X}))$ and $(\mathcal{V}, \tau_{\mathcal{R}^{'}}(\mathcal{Y}))$. Then a mapping $\psi: (\mathcal{U}, \tau_{\mathcal{R}}(\mathcal{X})) \rightarrow (\mathcal{V}, \tau_{\mathcal{R}^{'}}(\mathcal{Y}))$ is said to be \\
	(i) nano continuous \cite{Thiva 2} if inverse image of every nano open set in $\mathcal V$ is nano open in $\mathcal U$;\\
	(ii) nano open \cite{Thiva 2} if image of every nano open set in $\mathcal U$ is nano open in $\mathcal V$;\\
	(iii) nano homeomorphism \cite{Thiva 2} if $\psi$ is bijective, nano continuous and nano open;\\
	(iv) nano totally continuous \cite{Karth} if inverse image of every nano open set in $\mathcal{V}$ is nano clopen in $\mathcal U$;\\
	(v) nano contra continuous \cite{Thiva 3} if inverse image of every nano open set in $\mathcal{V}$ is nano closed in $\mathcal U$.
\end{definition}

\section{Nano h-open sets and nano h-continuity}

\begin{definition}
	Consider a nano topological space $(\mathcal{U}, \tau_{\mathcal{R}}(X))$ and a subset $\mathcal{B}$ of $\mathcal{U}$. Then $\mathcal{B}$ is said to be \textbf{nano $h$-open} if $\mathcal{B} \subseteq nInt(\mathcal{B}\cup \mathcal{O})$ for every non-empty nano open set $\mathcal{O}$ in $\mathcal{U}$ and $\mathcal{O} \neq \mathcal{U}$. Moreover, complement of a nano $h$-open set is called a nano $h$-closed set.
\end{definition}

\begin{note}
	$\tau_{\mathcal{R}}^{h}(\mathcal X)$ denotes the collection of all nano $h$-open sets in a nano topological space $(\mathcal{U}, \tau_{\mathcal{R}}( \mathcal X))$.
\end{note}

\begin{example}\label{Ex 1}
	Let $\mathcal{U}=\{a, b, c, d\}$, $\mathcal{U} \slash \mathcal{R}= \{ \{a\}, \{d\}, \{b, c\} \}$ and $\mathcal{X}= \{a, d\}$. Then $\tau_{\mathcal{R}}(\mathcal{X}) =\{U, \phi, \{a, d\}\}$. Clearly, $\{a\}$ is nano $h$-open set in $(\mathcal{U}, \tau_{\mathcal{R}}(\mathcal X))$.
\end{example}

\begin{theorem} \label{Thm 1}
	Every nano open set $\mathcal{B}$ in a nano topological space $(\mathcal{U}, \tau_{\mathcal{R}}(\mathcal X))$ is nano $h$-open in $(\mathcal{U}, \tau_{\mathcal{R}}( \mathcal X))$.
\end{theorem}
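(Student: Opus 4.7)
The plan is to unpack the definition of nano $h$-open and exploit the fact that the nano topology is closed under finite unions, so that $\mathcal{B} \cup \mathcal{O}$ is itself nano open whenever both $\mathcal{B}$ and $\mathcal{O}$ are.

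Concretely, I would fix an arbitrary non-empty nano open set $\mathcal{O} \subseteq \mathcal{U}$ with $\mathcal{O} \neq \mathcal{U}$ and then argue in two short steps. First, since $\tau_{\mathcal{R}}(\mathcal{X})$ is a topology on $\mathcal{U}$, the union $\mathcal{B} \cup \mathcal{O}$ of the two nano open sets $\mathcal{B}$ and $\mathcal{O}$ is again nano open. Second, since the nano interior of any nano open set coincides with the set itself, I get $nInt(\mathcal{B} \cup \mathcal{O}) = \mathcal{B} \cup \mathcal{O}$, and in particular $\mathcal{B} \subseteq \mathcal{B} \cup \mathcal{O} = nInt(\mathcal{B} \cup \mathcal{O})$. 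Because $\mathcal{O}$ was arbitrary, this verifies the defining inequality for nano $h$-openness.

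There is essentially no obstacle here beyond correctly invoking the topological closure properties of $\tau_{\mathcal{R}}(\mathcal{X})$; the entire content of the statement is that the defining condition of nano $h$-open sets is trivially fulfilled by any nano open set, because the candidate $\mathcal{B} \cup \mathcal{O}$ already lies in the topology. The only thing worth a brief sentence in the write-up is to note that the hypotheses $\mathcal{O} \neq \phi$ and $\mathcal{O} \neq \mathcal{U}$ are never used — they only restrict which $\mathcal{O}$'s the definition quantifies over, and making the quantifier range smaller only weakens the condition. Hence the implication \textbf{nano open $\Rightarrow$ nano $h$-open} follows immediately.
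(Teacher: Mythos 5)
Your proof is correct. It differs from the paper's in one small but real way: you justify $\mathcal{B} \subseteq nInt(\mathcal{B}\cup\mathcal{O})$ by observing that $\mathcal{B}\cup\mathcal{O}$ is itself nano open (closure of $\tau_{\mathcal{R}}(\mathcal{X})$ under unions), hence equal to its own nano interior, whereas the paper writes $\mathcal{B} \subseteq nInt(\mathcal{B}) \subseteq nInt(\mathcal{B}\cup\mathcal{O})$, using only that $\mathcal{B}=nInt(\mathcal{B})$ together with monotonicity of $nInt$. Both are one-line arguments; the paper's is marginally leaner in that it never needs the fact that the union of two nano open sets is nano open (a fact which does hold for $\tau_{\mathcal{R}}(\mathcal{X})$, so your step is valid), while your version makes the triviality of the $h$-open condition for open sets slightly more transparent. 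Your closing remark that the restrictions $\mathcal{O}\neq\phi$ and $\mathcal{O}\neq\mathcal{U}$ are never used is accurate and harmless.
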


\begin{proof}
	Since $\mathcal{B}$ is nano open, $\mathcal{B} \subseteq nInt \mathcal{B} \subseteq nInt(\mathcal{B}\cup \mathcal{O})$ for every non-empty nano open set $\mathcal{O}$ in $\mathcal{U}$ and $\mathcal{O} \neq \mathcal{U}$. Thus $\mathcal{B}$ is nano $h$-open.
\end{proof}

\begin{remark}
	The converse of Theorem \ref{Thm 1} need not be true, as we can see in Example \ref{Ex 1}, $\{a\}$ is nano $h$-open set in $(\mathcal{U}, \tau_{\mathcal{R}}(\mathcal X))$ but not nano open.  
\end{remark}

\begin{theorem} \label{Thm 2}
	Consider a nano topological space $(\mathcal{U}, \tau_{\mathcal{R}}(\mathcal X))$. Let $\mathcal{B}_{1}$ and $\mathcal{B}_{2}$ be any two nano $h$-open sets in $(\mathcal{U}, \tau_{\mathcal{R}}( \mathcal X))$. Then: 
	\begin{enumerate}
		\item Intersection of $\mathcal{B}_{1}$ and $\mathcal{B}_{2}$ is again nano $h$-open;
		\item Union of $\mathcal{B}_{1}$ and $\mathcal{B}_{2}$ is again nano $h$-open.
	\end{enumerate}
\end{theorem}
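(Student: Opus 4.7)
The plan is to unpack the definition of nano $h$-open directly: for an arbitrary non-empty nano open set $\mathcal{O} \subsetneq \mathcal{U}$, we must verify $\mathcal{B}_1 \cap \mathcal{B}_2 \subseteq nInt((\mathcal{B}_1 \cap \mathcal{B}_2) \cup \mathcal{O})$ and $\mathcal{B}_1 \cup \mathcal{B}_2 \subseteq nInt((\mathcal{B}_1 \cup \mathcal{B}_2) \cup \mathcal{O})$, using only the hypothesis $\mathcal{B}_i \subseteq nInt(\mathcal{B}_i \cup \mathcal{O})$ and elementary properties of the nano interior operator.

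First I would dispatch the union part, which is the easier of the two. Fix a non-empty nano open $\mathcal{O} \neq \mathcal{U}$. By hypothesis $\mathcal{B}_1 \subseteq nInt(\mathcal{B}_1 \cup \mathcal{O})$ and $\mathcal{B}_2 \subseteq nInt(\mathcal{B}_2 \cup \mathcal{O})$. Since both right-hand sides are contained in $nInt(\mathcal{B}_1 \cup \mathcal{B}_2 \cup \mathcal{O})$ by monotonicity of the interior operator, taking the union on the left immediately gives $\mathcal{B}_1 \cup \mathcal{B}_2 \subseteq nInt((\mathcal{B}_1 \cup \mathcal{B}_2) \cup \mathcal{O})$, as desired.

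For the intersection part I would again fix such an $\mathcal{O}$ and start from $\mathcal{B}_1 \cap \mathcal{B}_2 \subseteq nInt(\mathcal{B}_1 \cup \mathcal{O}) \cap nInt(\mathcal{B}_2 \cup \mathcal{O})$. The key step is the identity $nInt(A) \cap nInt(B) = nInt(A \cap B)$, which holds in any topological space (and hence in the nano topology) because the interior is the largest open set contained in a given set and open sets are closed under finite intersection. Applied to $A = \mathcal{B}_1 \cup \mathcal{O}$ and $B = \mathcal{B}_2 \cup \mathcal{O}$, together with the distributive law $(\mathcal{B}_1 \cup \mathcal{O}) \cap (\mathcal{B}_2 \cup \mathcal{O}) = (\mathcal{B}_1 \cap \mathcal{B}_2) \cup \mathcal{O}$, this yields exactly $\mathcal{B}_1 \cap \mathcal{B}_2 \subseteq nInt((\mathcal{B}_1 \cap \mathcal{B}_2) \cup \mathcal{O})$.

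There is no real obstacle here; the only subtle point worth flagging explicitly in the write-up is the distributivity of nano interior over finite intersections, which is what pushes the intersection case through. Once that standard fact is invoked, both items reduce to one-line set-theoretic manipulations.
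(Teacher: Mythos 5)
Your proof is correct and follows essentially the same route as the paper's: the union case via monotonicity of $nInt$, and the intersection case via the identity $nInt(A)\cap nInt(B)=nInt(A\cap B)$ combined with the distributive law $(\mathcal{B}_1\cup\mathcal{O})\cap(\mathcal{B}_2\cup\mathcal{O})=(\mathcal{B}_1\cap\mathcal{B}_2)\cup\mathcal{O}$. The paper performs exactly these two chains of inclusions, so there is nothing to add beyond your (welcome) explicit justification of why the interior operator distributes over finite intersections.
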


\begin{proof}
	(1) Since $\mathcal{B}_{1}$ and $\mathcal{B}_{2}$ are nano $h$-open, $\mathcal{B}_{1} \subseteq nInt(\mathcal{B}_{1} \cup \mathcal{O})$ and $\mathcal{B}_{2} \subseteq nInt(\mathcal{B}_{2} \cup \mathcal{O})$ for every non-empty nano open set $\mathcal{O}$ in $\mathcal{U}$ and $\mathcal{O} \neq \mathcal{U}$. Then $\mathcal{B}_{1} \cap \mathcal{B}_{2} \subseteq nInt(\mathcal{B}_{1} \cup \mathcal{O}) \cap nInt(\mathcal{B}_{2} \cup \mathcal{O}) = nInt((\mathcal{B}_{1} \cup \mathcal{O}) \cap (\mathcal{B}_{2} \cup \mathcal{O})) = nInt(( \mathcal{B}_{1} \cap \mathcal{B}_{2}) \cup \mathcal{O})$. Thus $\mathcal{B}_{1} \cap \mathcal{B}_{2}$ is nano $h$-open. \\
	(2) Since $\mathcal{B}_{1}$ and $\mathcal{B}_{2}$ are nano $h$-open, $\mathcal{B}_{1} \subseteq nInt(\mathcal{B}_{1} \cup \mathcal{O})$ and $\mathcal{B}_{2} \subseteq nInt(\mathcal{B}_{2} \cup \mathcal{O})$ for every non-empty nano open set $\mathcal{O}$ in $\mathcal{U}$ and $\mathcal{O} \neq \mathcal{U}$. Then $\mathcal{B}_{1} \cup \mathcal{B}_{2} \subseteq nInt(\mathcal{B}_{1} \cup \mathcal{O}) \cup nInt(\mathcal{B}_{2} \cup \mathcal{O}) \subseteq nInt((\mathcal{B}_{1} \cup \mathcal{O}) \cup (\mathcal{B}_{2} \cup \mathcal{O})) = nInt(( \mathcal{B}_{1} \cup \mathcal{B}_{2}) \cup \mathcal{O})$. Thus $\mathcal{B}_{1} \cup \mathcal{B}_{2}$ is nano $h$-open.
\end{proof}

\begin{corollary}
	Consider a nano topological space $(\mathcal{U}, \tau_{\mathcal{R}}(\mathcal X))$. Let $\mathcal{B}_{1}$ be a nano open set and $\mathcal{B}_{2}$ be a nano $h$-open set in $(\mathcal{U}, \tau_{\mathcal{R}}(\mathcal X))$. Then: 
	\begin{enumerate}
		\item Intersection of $\mathcal{B}_{1}$ and $\mathcal{B}_{2}$ is again nano $h$-open;
		\item Union of $\mathcal{B}_{1}$ and $\mathcal{B}_{2}$ is again nano $h$-open.
	\end{enumerate}
\end{corollary}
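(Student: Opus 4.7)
The plan is to reduce this corollary immediately to the two results already established. Theorem \ref{Thm 1} states that every nano open set is nano $h$-open, and Theorem \ref{Thm 2} states that the class of nano $h$-open sets is closed under finite intersections and finite unions. So the natural strategy is to upgrade the hypothesis on $\mathcal{B}_1$ from nano open to nano $h$-open, and then invoke Theorem \ref{Thm 2} on the pair $(\mathcal{B}_1,\mathcal{B}_2)$.

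Concretely, I would first observe that by Theorem \ref{Thm 1}, $\mathcal{B}_1$ is nano $h$-open in $(\mathcal{U}, \tau_{\mathcal{R}}(\mathcal{X}))$. Together with the assumption that $\mathcal{B}_2$ is nano $h$-open, both $\mathcal{B}_1$ and $\mathcal{B}_2$ are nano $h$-open sets. I would then apply Theorem \ref{Thm 2}(1) to conclude that $\mathcal{B}_1 \cap \mathcal{B}_2$ is nano $h$-open, and Theorem \ref{Thm 2}(2) to conclude that $\mathcal{B}_1 \cup \mathcal{B}_2$ is nano $h$-open. Each item of the corollary follows in a single line.

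There is no real obstacle here, since the work has all been done in the preceding theorems; the corollary is essentially a bookkeeping consequence. The only point worth checking is that the hypothesis on $\mathcal{O}$ in the definition of nano $h$-open (nonempty and $\mathcal{O}\neq \mathcal{U}$) is precisely the same hypothesis used in Theorems \ref{Thm 1} and \ref{Thm 2}, so no adjustment is needed when passing from ``nano open'' to ``nano $h$-open'' through Theorem \ref{Thm 1}.
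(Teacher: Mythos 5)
Your proposal is correct and matches the paper's intent exactly: the paper's proof is simply ``Obvious from Theorem \ref{Thm 1}'', i.e.\ upgrade $\mathcal{B}_1$ to nano $h$-open via Theorem \ref{Thm 1} and then apply Theorem \ref{Thm 2} to the pair. You have merely spelled out the same one-line reduction in more detail.
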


\begin{proof}
	Obvious from Theorem \ref{Thm 1}.
\end{proof}

\begin{definition}
	Consider a nano topological space $(\mathcal{U}, \tau_{\mathcal{R}}(\mathcal X))$ and a subset $\mathcal{B}$ of $\mathcal{U}$. Then \textbf{nano $h$-interior} of $\mathcal{B}$ is denoted by \textbf{$nInt_{h}(\mathcal{B})$} and defined as $$ nInt_{h}(\mathcal{B})= \bigcup \{ \mathcal{C} : \mathcal{C} ~is~ nano~ h-open~ in~ (\mathcal{U}, \tau_{\mathcal{R}}(\mathcal X))~ and ~ \mathcal{C} \subseteq \mathcal{B} \}$$ i.e. $nInt_{h}(\mathcal{B})$ is the largest nano $h$-open set in $(\mathcal{U}, \tau_{\mathcal{R}}(\mathcal X))$ contained in $\mathcal{B}$. Clearly, $nInt_{h}(\mathcal{B})$ is nano $h$-open set.
\end{definition}

\begin{definition}
	Consider a nano topological space $(\mathcal{U}, \tau_{\mathcal{R}}(\mathcal X))$ and a subset $\mathcal{B}$ of $\mathcal{U}$. Then \textbf{nano $h$-closure} of $\mathcal{B}$ is denoted by \textbf{$nCl_{h}(\mathcal{B})$} and defined as $$ nCl_{h}(\mathcal{B})= \bigcap \{ \mathcal{C} : \mathcal{C} ~is~ nano~ h-closed~ in~ (\mathcal{U}, \tau_{\mathcal{R}}(X))~and~\mathcal{B} \subseteq \mathcal{C}\}$$ i.e. $nCl_{h}(\mathcal{B})$ is the smallest nano $h$-closed set in $(\mathcal{U}, \tau_{\mathcal{R}}(X))$ containing $\mathcal{B}$. Clearly, $nCl_{h}(\mathcal{B})$ is nano $h$-closed set.
\end{definition}

\begin{proposition}
	Consider a nano topological space $(\mathcal{U}, \tau_{\mathcal{R}}(\mathcal X))$. Let $\mathcal{B}_{1}$ and $\mathcal{B}_{2}$ be any two subsets of $(\mathcal{U}, \tau_{\mathcal{R}}(\mathcal X))$ such that $\mathcal{B}_{1} \subseteq \mathcal{B}_{2}$. Then:
	\begin{enumerate}
		\item $nInt_{h}(\mathcal{B}_{1}) \subseteq nInt_{h}(\mathcal{B}_{2})$;
		\item $nCl_{h}(\mathcal{B}_{1}) \subseteq nCl_{h}(\mathcal{B}_{2})$.
	\end{enumerate}
\end{proposition}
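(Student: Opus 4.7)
The plan is to prove both monotonicity statements directly from the defining formulas for $nInt_h$ and $nCl_h$, since these operators are defined as a union and an intersection over a family of sets and monotonicity follows from how the defining families change when we enlarge the base set.

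For (1), I would argue as follows. By definition, $nInt_h(\mathcal{B}_1)$ is the union of all nano $h$-open sets $\mathcal{C}$ with $\mathcal{C}\subseteq\mathcal{B}_1$. Given any such $\mathcal{C}$, the hypothesis $\mathcal{B}_1\subseteq\mathcal{B}_2$ yields $\mathcal{C}\subseteq\mathcal{B}_2$, so $\mathcal{C}$ also belongs to the family whose union defines $nInt_h(\mathcal{B}_2)$. Taking the union over the (smaller) family for $\mathcal{B}_1$ therefore yields a subset of the union over the (larger) family for $\mathcal{B}_2$, giving $nInt_h(\mathcal{B}_1)\subseteq nInt_h(\mathcal{B}_2)$. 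Equivalently, one can observe that $nInt_h(\mathcal{B}_1)$ is itself a nano $h$-open subset of $\mathcal{B}_2$, hence is contained in the largest such subset, namely $nInt_h(\mathcal{B}_2)$.

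For (2), the argument is the dual. If $\mathcal{C}$ is any nano $h$-closed set with $\mathcal{B}_2\subseteq\mathcal{C}$, then $\mathcal{B}_1\subseteq\mathcal{B}_2\subseteq\mathcal{C}$, so $\mathcal{C}$ also belongs to the family of nano $h$-closed sets containing $\mathcal{B}_1$. Thus the family indexing the intersection for $nCl_h(\mathcal{B}_2)$ is contained in the family indexing the intersection for $nCl_h(\mathcal{B}_1)$, and intersecting over a larger family produces a smaller set, yielding $nCl_h(\mathcal{B}_1)\subseteq nCl_h(\mathcal{B}_2)$.

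There is no real obstacle here: the statement is pure formal monotonicity of an operator defined via union (respectively intersection) over a family that grows (respectively shrinks) with the base set, and no properties of nano $h$-openness beyond the definitions of $nInt_h$ and $nCl_h$ are needed. The only care required is to keep the direction of the containment of the indexing families straight, especially in part (2) where the family shrinks while the intersection grows.
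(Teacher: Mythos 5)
Your proof is correct. The paper states this proposition without any proof at all, and your argument — that enlarging $\mathcal{B}$ enlarges the family of nano $h$-open sets indexing the union for $nInt_h$ and shrinks the family of nano $h$-closed sets indexing the intersection for $nCl_h$ — is exactly the standard argument the authors evidently regarded as immediate; both directions are handled correctly, including the slightly delicate reversal in part (2).
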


\begin{proposition}
	Consider a nano topological space $(\mathcal{U}, \tau_{\mathcal{R}}( \mathcal X))$ and a subset $\mathcal{B}$ of $\mathcal{U}$. Then: 
	\begin{enumerate}
		\item $nInt_{h}(\mathcal{B}) \subseteq \mathcal{B}$; 
		\item $\mathcal{B} \subseteq nCl_{h}(\mathcal{B})$;
		\item $\mathcal{B}$ is nano $h$-open $\Leftrightarrow \mathcal{B} = nInt_{h}(\mathcal{B})$;
		\item $\mathcal{B}$ is nano $h$-closed $\Leftrightarrow \mathcal{B} = nCl_{h}(\mathcal{B})$.
	\end{enumerate}
\end{proposition}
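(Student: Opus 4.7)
The plan is to derive each of the four items directly from the definitions of $nInt_{h}(\mathcal{B})$ and $nCl_{h}(\mathcal{B})$, using only elementary set-theoretic reasoning together with Theorem~\ref{Thm 2} where needed.

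For (1), I would observe that $nInt_{h}(\mathcal{B})$ is by construction the union of a family of nano $h$-open sets each of which is a subset of $\mathcal{B}$; since each member of this family lies inside $\mathcal{B}$, so does their union. Part (2) is the dual statement: $nCl_{h}(\mathcal{B})$ is an intersection of nano $h$-closed supersets of $\mathcal{B}$, hence $\mathcal{B}$ lies in their common intersection.

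For (3), the forward direction is immediate: if $\mathcal{B}$ is nano $h$-open, then $\mathcal{B}$ itself belongs to the family whose union defines $nInt_{h}(\mathcal{B})$, which gives $\mathcal{B}\subseteq nInt_{h}(\mathcal{B})$, and (1) supplies the reverse inclusion. For the converse, I need the fact --- already flagged after the definition --- that $nInt_{h}(\mathcal{B})$ is itself nano $h$-open; this requires showing that the union of any collection of nano $h$-open sets is nano $h$-open. Because $\mathcal{U}$ is finite, any such union is in fact a finite union, and Theorem~\ref{Thm 2}(2) applied inductively establishes it. Part (4) proceeds symmetrically: the forward implication uses that $\mathcal{B}$ is a nano $h$-closed superset of itself, and the converse rests on the fact that $nCl_{h}(\mathcal{B})$ is nano $h$-closed, which follows via De Morgan's laws from the union-closure property just invoked.

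I do not expect any genuine obstacle: the proposition records the standard Kuratowski-style properties shared by every reasonable interior/closure operator, and each item is just a transcription of the corresponding defining universal property (``largest $\ldots$ contained in'' and ``smallest $\ldots$ containing''). The only point worth explicit care is the implicit assertion that $nInt_{h}(\mathcal{B})$ and $nCl_{h}(\mathcal{B})$ lie in the classes of nano $h$-open and nano $h$-closed sets, respectively, and this reduces --- in a finite universe --- to the pairwise union result of Theorem~\ref{Thm 2}(2) together with De Morgan duality.
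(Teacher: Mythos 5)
Your argument is correct, and the paper in fact states this proposition without any proof, so you are simply supplying the standard argument the authors intend: each item is read off from the definitions of $nInt_{h}$ and $nCl_{h}$, with the only substantive point being that $nInt_{h}(\mathcal{B})$ is itself nano $h$-open (and dually for $nCl_{h}(\mathcal{B})$), which you rightly reduce to Theorem~\ref{Thm 2}(2) via finiteness of $\mathcal{U}$. As a minor remark, the finiteness detour is unnecessary: for any family of nano $h$-open sets $\mathcal{B}_i$ one has $\mathcal{B}_i \subseteq nInt(\mathcal{B}_i \cup \mathcal{O}) \subseteq nInt\bigl(\bigl(\bigcup_j \mathcal{B}_j\bigr) \cup \mathcal{O}\bigr)$ directly, so arbitrary unions of nano $h$-open sets are nano $h$-open without induction.
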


\begin{definition}
	Consider two nano topological spaces $(\mathcal{U}, \tau_{\mathcal{R}}(\mathcal{X}))$ and $(\mathcal{V}, \tau_{\mathcal{R}^{'}}(\mathcal{Y}))$. Then a mapping $\psi: (\mathcal{U}, \tau_{\mathcal{R}}(\mathcal{X})) \rightarrow (\mathcal{V}, \tau_{\mathcal{R}^{'}}(\mathcal{Y}))$ is said to be \textbf{nano $h$-continuous} if $\psi^{-1}(\mathcal{O})$ is nano $h$-open in $\mathcal{U}$ for every nano open set $\mathcal{O}$ in $\mathcal{V}$.
\end{definition}

\begin{example} \label{Ex 2}
	Let $\mathcal{U}=\mathcal{V}=\{a,b,c\}$ with $\mathcal{U} \slash \mathcal{R}= \{ \{a\}, \{b\}, \{c\} \}$ and $\mathcal{V} \slash \mathcal{R}= \{ \{a,b\}, \{c\} \}$. Let $\mathcal{X}= \{a,c \} \subseteq \mathcal{U}$  and $\mathcal{Y}=\{a,c\} \subseteq \mathcal{V}$. Then $\tau_{\mathcal{R}} (\mathcal{X})= \{ \phi, \mathcal{U}, \{a,c\} \}$ and $\tau_{\mathcal{R}^{'}} (\mathcal{Y})= \{ \phi, \mathcal{V}, \{c\}, \{a,b\}\}$. Further, $\tau_{\mathcal{R}}^{h}(\mathcal{X})= \mathcal{P}(\mathcal{U})$. Define $\psi: (\mathcal{U}, \tau_{\mathcal{R}}(\mathcal{X})) \rightarrow (\mathcal{V},  \tau_{\mathcal{R}^{'}}(\mathcal{Y}))$ to be an identity mapping. Then $\psi$ is nano $h$-continuous.
\end{example}

\begin{theorem} \label{Thm 3}
	Suppose a mapping $\psi: (\mathcal{U}, \tau_{\mathcal{R}}(\mathcal{X})) \rightarrow (\mathcal{V}, \tau_{\mathcal{R}^{'}}(\mathcal{Y}))$ is nano continuous. Then $\psi$ is nano $h$-continuous.
\end{theorem}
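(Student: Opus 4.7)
The plan is to chain the definition of nano continuity with Theorem \ref{Thm 1}. Let $\mathcal{O}$ be an arbitrary nano open set in $\mathcal{V}$. Since $\psi$ is nano continuous, $\psi^{-1}(\mathcal{O})$ is nano open in $\mathcal{U}$. I would then invoke Theorem \ref{Thm 1}, which guarantees that every nano open set is automatically nano $h$-open, to conclude that $\psi^{-1}(\mathcal{O})$ is nano $h$-open in $\mathcal{U}$. Since $\mathcal{O}$ was arbitrary, this is exactly the defining condition for $\psi$ to be nano $h$-continuous.

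There is essentially no obstacle here; the statement is an immediate corollary of Theorem \ref{Thm 1}, and the only ``work'' is to write one implication carefully. I would keep the proof to two or three lines and not introduce any auxiliary notation. If desired, one could also remark (outside the proof, as in the pattern set by the earlier Remark) that the converse fails, since Example \ref{Ex 2} already exhibits a nano $h$-continuous identity map that is not nano continuous: the nano open set $\{c\}$ of $\mathcal{V}$ pulls back to $\{c\}$, which is not nano open in $\mathcal{U}$ relative to $\tau_{\mathcal{R}}(\mathcal{X})=\{\phi,\mathcal{U},\{a,c\}\}$, but is nano $h$-open since $\tau_{\mathcal{R}}^{h}(\mathcal{X})=\mathcal{P}(\mathcal{U})$.
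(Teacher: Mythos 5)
Your proposal is correct and follows exactly the same route as the paper: pull back an arbitrary nano open set, use nano continuity to get a nano open preimage, and apply Theorem \ref{Thm 1} to conclude it is nano $h$-open. Your added observation about the failure of the converse via Example \ref{Ex 2} likewise matches the remark the paper places immediately after the theorem.
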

\begin{proof}
	Suppose $\psi: (\mathcal{U}, \tau_{\mathcal{R}}(\mathcal{X})) \rightarrow (\mathcal{V}, \tau_{\mathcal{R}^{'}}(\mathcal{Y}))$ be a nano continuous function and $\mathcal{O}$ be any nano open set in $\mathcal{V}$. By hypothesis, $\psi^{-1}(\mathcal{O})$ is nano open in $\mathcal{U}$ and by Theorem \ref{Thm 1}, $\psi^{-1}(\mathcal{O})$ is nano $h$-open in $\mathcal{U}$. Thus $\psi: (\mathcal{U}, \tau_{\mathcal{R}}(\mathcal{X})) \rightarrow (\mathcal{V}, \tau_{\mathcal{R}^{'}}(\mathcal{Y}))$ is nano $h$-continuous. 
\end{proof}

\begin{remark}
	Converse of Theorem \ref{Thm 3} need not be true, as we can see in Example \ref{Ex 2}, $\{c\}$ is nano open in $\mathcal V$ but $\psi^{-1}(\{c\})=\{ c \}$ is not nano open in $\mathcal U$. 
\end{remark}

\begin{theorem} \label{Thm 4}
	Consider a function $\psi: (\mathcal{U}, \tau_{\mathcal{R}}(\mathcal{X})) \rightarrow (\mathcal{V}, \tau_{\mathcal{R}^{'}}(\mathcal{Y}))$. Then the following are equivalent: 
	\begin{itemize}
		\item[(1)] $\psi$ is nano $h$-continuous.
		\item[(2)] $\psi^{-1}(\mathcal K)$ is nano $h$-closed set in $\mathcal{U}$, for every nano closed set $\mathcal K$ in $\mathcal V$.
		\item[(3)] $\psi(nCl_{h}(\mathcal B)) \subseteq nCl(\psi(\mathcal{B}))$, for each subset $\mathcal{B}$ of $\mathcal{U}$.
		\item[(4)] $nCl_{h}(\psi^{-1}(\mathcal C)) \subseteq \psi^{-1}(nCl(\mathcal C))$, for each subset $\mathcal C$ of $\mathcal V$.
		\item[(5)] $\psi^{-1}(nInt (\mathcal C)) \subseteq nInt_{h}(\psi^{-1}(\mathcal C))$, for each subset $\mathcal C$ of $\mathcal V$.
	\end{itemize}
\end{theorem}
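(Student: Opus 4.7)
The plan is to prove the five equivalences by the cyclic chain $(1)\Rightarrow(2)\Rightarrow(3)\Rightarrow(4)\Rightarrow(5)\Rightarrow(1)$, which is the most economical route since each step either uses complementation or a direct substitution.

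For $(1)\Rightarrow(2)$, I would take a nano closed set $\mathcal K\subseteq\mathcal V$, note that $\mathcal V\setminus\mathcal K$ is nano open, and observe $\psi^{-1}(\mathcal V\setminus\mathcal K)=\mathcal U\setminus\psi^{-1}(\mathcal K)$ is nano $h$-open by hypothesis, so $\psi^{-1}(\mathcal K)$ is nano $h$-closed. For $(2)\Rightarrow(3)$, given $\mathcal B\subseteq\mathcal U$ the set $nCl(\psi(\mathcal B))$ is nano closed in $\mathcal V$; by $(2)$, $\psi^{-1}(nCl(\psi(\mathcal B)))$ is nano $h$-closed and contains $\mathcal B$, hence also contains $nCl_h(\mathcal B)$, and taking $\psi$ of both sides yields $(3)$. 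For $(3)\Rightarrow(4)$, I would simply plug in $\mathcal B=\psi^{-1}(\mathcal C)$ and use $\psi(\psi^{-1}(\mathcal C))\subseteq\mathcal C\subseteq nCl(\mathcal C)$ together with applying $\psi^{-1}$ to both sides of $(3)$.

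For $(4)\Rightarrow(5)$, the natural move is to replace $\mathcal C$ by $\mathcal V\setminus\mathcal C$ in $(4)$ and take complements in $\mathcal U$. This requires the identity $nCl_h(\mathcal U\setminus\mathcal A)=\mathcal U\setminus nInt_h(\mathcal A)$, which I would justify briefly by noting that a set is nano $h$-closed iff its complement is nano $h$-open (Theorem~\ref{Thm 2} guarantees the family $\tau_{\mathcal R}^{h}(\mathcal X)$ is closed under finite intersections and unions, so complements of nano $h$-closed sets behave as expected), and similarly $nCl(\mathcal V\setminus\mathcal C)=\mathcal V\setminus nInt(\mathcal C)$ in the nano topology on $\mathcal V$; combined with $\psi^{-1}(\mathcal V\setminus\mathcal C)=\mathcal U\setminus\psi^{-1}(\mathcal C)$ the inclusion flips into $(5)$. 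Finally, for $(5)\Rightarrow(1)$, take $\mathcal O$ nano open in $\mathcal V$ so $nInt(\mathcal O)=\mathcal O$; then $(5)$ gives $\psi^{-1}(\mathcal O)\subseteq nInt_h(\psi^{-1}(\mathcal O))$, and since the reverse inclusion always holds, equality shows $\psi^{-1}(\mathcal O)$ is nano $h$-open.

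The only nontrivial bookkeeping step is $(4)\Rightarrow(5)$, where I need to dualise two different topological operators on the two spaces. Everything else is essentially formal manipulation of preimages together with the two characterisations already listed in the preceding propositions ($\mathcal B$ is nano $h$-open iff $\mathcal B=nInt_h(\mathcal B)$, and its closed analogue). I do not expect serious obstacles beyond carefully distinguishing $nCl$ and $nInt$ (on $\mathcal V$) from $nCl_h$ and $nInt_h$ (on $\mathcal U$) and not confusing the direction of the inclusions after taking complements.
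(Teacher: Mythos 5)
Your proposal is correct, but it is organised differently from the paper's proof. The paper uses a ``star'' argument centred at (1): it proves $(1)\Leftrightarrow(2)$, $(1)\Leftrightarrow(3)$, $(1)\Leftrightarrow(4)$ and $(1)\Leftrightarrow(5)$ separately, eight implications in all, each time falling back on the characterisations $\mathcal B=nInt_h(\mathcal B)$ resp.\ $\mathcal B=nCl_h(\mathcal B)$ of nano $h$-open resp.\ nano $h$-closed sets. Your cyclic chain $(1)\Rightarrow(2)\Rightarrow(3)\Rightarrow(4)\Rightarrow(5)\Rightarrow(1)$ needs only five implications, and each of $(2)\Rightarrow(3)$ and $(3)\Rightarrow(4)$ is a clean substitution (the paper's $(1)\Rightarrow(3)$ in fact secretly routes through $(2)$ exactly as you do, and its $(1)\Rightarrow(4)$ redoes work that your substitution $\mathcal B=\psi^{-1}(\mathcal C)$ gets for free). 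The price of the economy is the step $(4)\Rightarrow(5)$, where you must invoke the duality $nCl_h(\mathcal U\setminus\mathcal A)=\mathcal U\setminus nInt_h(\mathcal A)$, a lemma the paper never needs because it always returns to (1). That duality is true, but note that your justification slightly misses the point: it follows from De Morgan's laws together with the bare definition that a set is nano $h$-closed iff its complement is nano $h$-open (so the $h$-closed sets containing $\mathcal U\setminus\mathcal A$ are exactly the complements of the $h$-open sets contained in $\mathcal A$); Theorem~\ref{Thm 2} on finite unions and intersections is not what is needed here. With that justification repaired, your argument is complete and arguably tidier than the paper's.
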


\begin{proof}
	$(1) \Rightarrow (2)$ Suppose $\psi$ is nano $h$-continuous and $\mathcal{K}$ is nano closed in $\mathcal V$. Then $\mathcal V \setminus \mathcal K$ is nano open in $V$. As $\psi$ is nano $h$-continuous, $\psi^{-1}( \mathcal V \setminus  \mathcal K)$ is nano $h$-open in $\mathcal U$. Thus $\mathcal V \setminus \psi^{-1}( \mathcal K)$ is nano $h$-open in $\mathcal U$ and hence $\psi^{-1}( \mathcal K)$ is nano $h$-closed in $\mathcal U$. \\
	
	$(2) \Rightarrow (1)$ Let $\mathcal O$ be a nano open set in $\mathcal V$. Then $\mathcal V \setminus \mathcal O$ is nano closed in $\mathcal V$ and $\psi^{-1}( \mathcal V \setminus  \mathcal O)$ is nano $h$-closed in $\mathcal U$. Thus $\psi^{-1}( \mathcal O)$ is nano $h$-open in $\mathcal U$. Hence $\psi$ is nano $h$-continuous. \\
	
	$(1) \Rightarrow (3)$ Suppose $\psi$ is nano $h$-continuous. Clearly, $nCl(\psi(\mathcal B))$ is nano closed in $\mathcal V$ and thus by (2), $\psi^{-1}(nCl(\psi(\mathcal B)))$ is nano $h$-closed in $\mathcal U$. Also, $\psi(\mathcal B) \subseteq nCl(\psi(\mathcal B)) $ implies $\psi^{-1}(\psi(\mathcal B)) \subseteq \psi^{-1}(nCl(\psi(\mathcal B)))$. This further implies that $nCl_{h}(\mathcal B) \subseteq nCl_{h}[\psi^{-1}(nCl(\psi(\mathcal B)))] = \psi^{-1}(nCl(\psi(\mathcal B)))$. Consequently, $nCl_{h}(\mathcal B) \subseteq \psi^{-1}(nCl(\psi(\mathcal B)))$ and hence $\psi(nCl_{h}(\mathcal B) \subseteq nCl(\psi(\mathcal{B}))$. \\
	
	$(3) \Rightarrow (1)$ Suppose  $\psi(nCl_{h}(\mathcal B)) \subseteq nCl(\psi(\mathcal{B}))$, for each subset $\mathcal{B}$ of $\mathcal{U}$. Consider a nano closed set $\mathcal K$ in $\mathcal V$. Now $\psi(nCl_{h}(\psi^{-1}(\mathcal K))) \subseteq nCl(\psi( \psi^{-1}(\mathcal K)))=nCl( \mathcal K)= \mathcal K$. Thus, $nCl_{h}(\psi^{-1}(\mathcal K)) \subseteq \psi^{-1}(\mathcal K)$. Also $\psi^{-1}(\mathcal K) \subseteq nCl_{h}(\psi^{-1}(\mathcal K))$. Consequently, $nCl_{h}(\psi^{-1}(\mathcal K)) = \psi^{-1}(\mathcal K)$. This implies that $\psi^{-1}(\mathcal K)$ is nano $h$-closed in $\mathcal U$ and hence $\psi$ is nano $h$-continuous.\\
	
	$(1) \Rightarrow (4)$ Suppose $\psi$ is nano $h$-continuous. Since $nCl(\mathcal C)$ is nano closed in $\mathcal V$, $\psi^{-1}(nCl(\mathcal C))$ is nano $h$-closed in $\mathcal U$. Thus $nCl_{h}[\psi^{-1}(nCl(\mathcal C))] =\psi^{-1}(nCl(\mathcal C))$. Further $\mathcal C \subseteq nCl(\mathcal C)$ implies $\psi^{-1}(\mathcal C) \subseteq \psi^{-1}(nCl(\mathcal C))$. Consequently, $nCl_{h}(\psi^{-1}(\mathcal C)) \subseteq \psi^{-1}(nCl(\mathcal C))$.\\
	
	$(4) \Rightarrow (1)$ Let us suppose $nCl_{h}(\psi^{-1}(\mathcal C)) \subseteq \psi^{-1}(nCl(\mathcal C))$, for each $\mathcal C \subseteq \mathcal V$. To prove $\psi$ is nano $h$-continuous, consider a nano closed set $\mathcal K$ in $\mathcal V$. Then $nCl(\mathcal K)= \mathcal K$. By hypothesis, $nCl_{h}(\psi^{-1}(\mathcal K)) \subseteq \psi^{-1}(nCl(\mathcal K))= \psi^{-1}(\mathcal K)$. Also, $\psi^{-1}(\mathcal K) \subseteq nCl_{h}(\psi^{-1}(\mathcal K))$. Consequently, $\psi^{-1}(\mathcal K) =nCl_{h}(\psi^{-1}(\mathcal K))$. This implies that $\psi^{-1}(\mathcal K)$ is nano $h$-closed set in $\mathcal U$. This completes the proof. \\
	
	$(1) \Rightarrow (5)$ Suppose $\psi$ is nano $h$-continuous. Since $nInt(\mathcal C)$ is nano open in $\mathcal V$, $\psi^{-1}(nInt(\mathcal C))$ is nano $h$-open in $\mathcal U$. Consequently, $nInt_{h}(\psi^{-1}(nInt \mathcal C)) =\psi^{-1}(nInt \mathcal C)$. Also, $\psi^{-1}(nInt(\mathcal C)) \subseteq \psi^{-1}(\mathcal C)$ follows from $nInt \mathcal C \subseteq \mathcal C$. This implies that $nInt_{h}(\psi^{-1}(nInt (\mathcal C))) \subseteq nInt_{h}(\psi^{-1}(\mathcal C))$. This further implies that $\psi^{-1}(nInt \mathcal C) \subseteq nInt_{h}(\psi^{-1}(\mathcal C))$. \\
	
	$(5) \Rightarrow (1)$ Suppose $\psi^{-1}(nInt \mathcal C) \subseteq nInt_{h}(\psi^{-1}(\mathcal C))$ for each subset $\mathcal C \subseteq \mathcal V$. To prove $\psi$ is nano $h$-continuous, consider a nano open set $\mathcal O$ in $\mathcal V$. Then $nInt(\mathcal O)= \mathcal O$. By hypothesis, $\psi^{-1}(nInt(\mathcal O)) \subseteq nInt_{h}(\psi^{-1}(\mathcal O))$. Thus $\psi^{-1}(\mathcal O) \subseteq nInt_{h}(\psi^{-1}(\mathcal O))$. Also, $nInt_{h}(\psi^{-1}(\mathcal O)) \subseteq \psi^{-1}(\mathcal O)$. Therefore, $\psi^{-1}(\mathcal O)= nInt_{h}(\psi^{-1}(\mathcal O))$. Consequently, $\psi^{-1}(\mathcal O)$ is nano $h$-open set in $\mathcal U$ and thus $\psi$ is nano $h$-continuous. 
\end{proof}

\begin{note}
	Equality need not hold in $(3), (4)$ and $(5)$ of Theorem \ref{Thm 4}. It can be illustrated by the following example:
\end{note}

\begin{example} \label{Ex 3}
	Let $\mathcal U=\{a,b,c \}$ with $\mathcal U \slash \mathcal R= \{ \{a\}, \{b\}, \{c\}\}$, $\mathcal X= \{a\}$. Then $\tau_{\mathcal R}(\mathcal X)= \{ \phi, \mathcal U, \{a\}\}$, $\tau_{\mathcal R}^{h}(\mathcal X) = \{ \phi, \mathcal U, \{a\}, \{b,c\}\}$. Let $\mathcal V= \{1,2,3\}$, $\mathcal V \slash \mathcal R^{'}=\{ \{1\}, \{2,3\}\}$, $\mathcal Y= \{2,3\}$. Then $\tau_{\mathcal R^{'}}(\mathcal Y)= \{ \phi, \mathcal V, \{2,3\} \}$, $\tau^{h}_{\mathcal R^{'}}(\mathcal Y) = \mathcal P(\mathcal V)$. Define  $\psi: (\mathcal{U}, \tau_{\mathcal{R}}(\mathcal{X})) \rightarrow (\mathcal{V}, \tau_{\mathcal{R}^{'}}(\mathcal{Y}))$ as $\psi(\{a\})=\{1\}, \psi(\{b\})=\{2\}$ and $\psi(\{c\})=\{3\}$. Clearly, $\psi$ is nano $h$-continuous. 
	\begin{enumerate}
		\item Let $\mathcal B= \{b,c\} \subseteq \mathcal U$. Now $\psi(nCl_{h}(\{b,c\})= \psi(\{b,c\})=\{2,3\}$ but $nCl(\psi(\{b,c\}))= nCl(\{2,3\}) =\mathcal V$. Thus $\psi(nCl_{h}(\mathcal B) \neq nCl(\psi(\mathcal{B}))$.
		\item Let $\mathcal C = \{2,3\} \subseteq \mathcal V$. Now $nCl_{h}(\psi^{-1}(\{2,3\}))= nCl_{h}(\{b,c\})=\{b,c\}$ but $\psi^{-1}(nCl(\{2,3\}))= \psi^{-1}(\mathcal V)= \mathcal U$. Thus $nCl_{h}(\psi^{-1}(\mathcal C)) \neq \psi^{-1}(nCl(\mathcal C))$. 
		\item Let $\mathcal C= \{1\} \subseteq \mathcal V$. Now $\psi^{-1}(nInt\{1\})= \psi^{-1}(\phi)$ but $nInt_{h}(\psi^{-1}(\{1\}))= nInt_{h}(\{a\})=\{a\}$. Thus $\psi^{-1}(nInt (\mathcal C)) \neq nInt_{h}(\psi^{-1}(\mathcal C))$.
	\end{enumerate}
\end{example}

\begin{definition}
	Consider two nano topological spaces $(\mathcal{U}, \tau_{\mathcal{R}}(\mathcal{X}))$ and $(\mathcal{V}, \tau_{\mathcal{R}^{'}}(\mathcal{Y}))$. Then a mapping $\psi: (\mathcal{U}, \tau_{\mathcal{R}}(\mathcal{X})) \rightarrow (\mathcal{V}, \tau_{\mathcal{R}^{'}}(\mathcal{Y}))$ is said to be \textbf{nano $h$-open} if $\psi(\mathcal{O})$ is nano $h$-open in $\mathcal{V}$ for every nano open set $\mathcal{O}$ in $\mathcal{U}$.
\end{definition}

\begin{example}\label{Ex 4}
	Let $\mathcal U= \mathcal V= \{a,b,c\}$ with $\mathcal U \slash \mathcal R= \{ \{a, b\}, \{c\}\}$, $\mathcal V \slash \mathcal R^{'}=\{ \{a\}, \{b\},\{c\}\}$. Let $\mathcal X =\{a,c\}$, $\mathcal Y= \{a,c\}$. Then $\tau_{\mathcal R}(\mathcal X)= \{ \phi, \mathcal U, \{c\},\{a,b\} \}$ and $\tau_{\mathcal R^{'}}(\mathcal Y)= \{ \phi, \mathcal V, \{a,c\} \}$. Thus $\tau^{h}_{\mathcal R^{'}}(\mathcal Y) = \mathcal P(\mathcal V)$. Clearly, the identity function $\psi:(\mathcal{U}, \tau_{\mathcal{R}}(\mathcal{X})) \rightarrow (\mathcal{V}, \tau_{\mathcal{R}^{'}}(\mathcal{Y}))$ is nano $h$-open.
\end{example}

\begin{theorem} \label{Thm 5}
	Suppose a mapping $\psi: (\mathcal{U}, \tau_{\mathcal{R}}(\mathcal{X})) \rightarrow (\mathcal{V}, \tau_{\mathcal{R}^{'}}(\mathcal{Y}))$ is nano open. Then $\psi$ is nano $h$-open.
\end{theorem}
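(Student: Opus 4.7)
The plan is to reduce the statement immediately to Theorem \ref{Thm 1}, exactly as was done for Theorem \ref{Thm 3} in the nano-continuous setting. Nano openness is phrased in terms of images of nano open sets, and nano $h$-openness is phrased in terms of images being nano $h$-open, so the inclusion (nano open) $\subseteq$ (nano $h$-open) at the level of subsets of $\mathcal{V}$ should transfer directly.

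Concretely, I would start with an arbitrary nano open set $\mathcal{O}$ in $(\mathcal{U}, \tau_{\mathcal{R}}(\mathcal{X}))$. Since $\psi$ is nano open by hypothesis, its image $\psi(\mathcal{O})$ is a nano open set in $(\mathcal{V}, \tau_{\mathcal{R}^{'}}(\mathcal{Y}))$. Applying Theorem \ref{Thm 1} to the nano topological space on $\mathcal{V}$, we conclude that $\psi(\mathcal{O})$ is nano $h$-open in $\mathcal{V}$. Since $\mathcal{O}$ was an arbitrary nano open set in $\mathcal{U}$, this shows that $\psi$ sends every nano open set of $\mathcal{U}$ to a nano $h$-open set of $\mathcal{V}$, which is precisely the definition of nano $h$-openness of $\psi$.

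There is essentially no obstacle here: the argument is a one-line invocation of Theorem \ref{Thm 1}, analogous in spirit to the proof of Theorem \ref{Thm 3}. The only thing to be careful about is to apply Theorem \ref{Thm 1} in the correct space (the codomain $\mathcal{V}$ rather than the domain $\mathcal{U}$), but since the theorem holds for any nano topological space, this is automatic.
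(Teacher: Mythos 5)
Your argument is correct and coincides with the paper's own proof: both take an arbitrary nano open set $\mathcal{O}$ in $\mathcal{U}$, use the hypothesis to get that $\psi(\mathcal{O})$ is nano open in $\mathcal{V}$, and then apply Theorem \ref{Thm 1} in the codomain to conclude it is nano $h$-open. No gaps.
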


\begin{proof}
	To prove $\psi$ is nano $h$-open, consider a nano open set $\mathcal O$ in $\mathcal U$. By hypothesis, $\psi(\mathcal O)$ is nano open in $\mathcal V$ and by Theorem \ref{Thm 1}, $\psi(\mathcal O)$ is nano $h$-open in $\mathcal V$. This completes the proof.
\end{proof}

\begin{remark}
	Converse of Theorem \ref{Thm 5} need not be true, as we can see in Example \ref{Ex 4}, $\{c\}$ is nano open in $\mathcal U$ but $\psi(\{c\})=\{c\}$ is not nano open in $\mathcal V$.
\end{remark}

\begin{definition}
	Consider two nano topological spaces $(\mathcal{U}, \tau_{\mathcal{R}}(\mathcal{X}))$ and $(\mathcal{V}, \tau_{\mathcal{R}^{'}}(\mathcal{Y}))$. Then a mapping $\psi: (\mathcal{U}, \tau_{\mathcal{R}}(\mathcal{X})) \rightarrow (\mathcal{V}, \tau_{\mathcal{R}^{'}}(\mathcal{Y}))$ is said to be nano h-irresolute if $\psi^{-1}(\mathcal{O})$ is nano $h$-open in $\mathcal{U}$ for every nano $h$-open set $\mathcal{O}$ in $\mathcal{V}$.
\end{definition}

\begin{example} \label{Ex 5}
	Let $\mathcal U= \mathcal V = \{a,b,c\}$ with $\mathcal U \slash \mathcal R= \mathcal V \slash \mathcal R^{'} = \{ \{a,b\}, \{c\}\}$. Let $\mathcal X= \mathcal Y = \{a,b\}$. Then $\tau_{\mathcal R}(\mathcal X)= \{ \phi, \mathcal U, \{a,b\} \}$, $\tau_{\mathcal R^{'}}(\mathcal Y)= \{ \phi, \mathcal V, \{a,b\} \}$. Also, $\tau_{\mathcal R}^{h}(\mathcal X) = \mathcal P(\mathcal U)$ and $\tau_{\mathcal R^{'}}^{h}(\mathcal Y) = \mathcal P(\mathcal V)$. Clearly, the identity function $\psi:(\mathcal{U}, \tau_{\mathcal{R}}(\mathcal{X})) \rightarrow (\mathcal{V}, \tau_{\mathcal{R}^{'}}(\mathcal{Y}))$ is nano $h$-irresolute function.
\end{example}

\begin{theorem} \label{Thm 6}
	Suppose a mapping $\psi: (\mathcal{U}, \tau_{\mathcal{R}}(\mathcal{X})) \rightarrow (\mathcal{V}, \tau_{\mathcal{R}^{'}}(\mathcal{Y}))$ is nano h-irresolute. Then $\psi$ is nano $h$-continuous.
\end{theorem}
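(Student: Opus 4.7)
The plan is a short, direct reduction using Theorem \ref{Thm 1}. I want to show that for every nano open set $\mathcal{O}$ in $\mathcal{V}$, the preimage $\psi^{-1}(\mathcal{O})$ is nano $h$-open in $\mathcal{U}$. The hypothesis gives me this conclusion for every nano $h$-open set in $\mathcal{V}$, so I just need to bridge the gap between ``nano open in $\mathcal{V}$'' and ``nano $h$-open in $\mathcal{V}$.''

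First, I would pick an arbitrary nano open set $\mathcal{O}$ in $\mathcal{V}$. By Theorem \ref{Thm 1}, applied in the space $(\mathcal{V}, \tau_{\mathcal{R}^{'}}(\mathcal{Y}))$, the set $\mathcal{O}$ is automatically nano $h$-open in $\mathcal{V}$. Then invoking the hypothesis that $\psi$ is nano $h$-irresolute, the preimage $\psi^{-1}(\mathcal{O})$ is nano $h$-open in $\mathcal{U}$. Since $\mathcal{O}$ was an arbitrary nano open set in $\mathcal{V}$, this verifies the definition of nano $h$-continuity.

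There is essentially no obstacle here: the proof is a one-line chain of implications, and the only nontrivial ingredient is the already-established inclusion ``nano open $\Rightarrow$ nano $h$-open'' from Theorem \ref{Thm 1}. I would keep the write-up to two or three sentences and make explicit the role of Theorem \ref{Thm 1} so that the logical dependency is transparent.
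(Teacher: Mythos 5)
Your proposal is correct and follows exactly the same route as the paper's proof: take a nano open set $\mathcal{O}$ in $\mathcal{V}$, upgrade it to nano $h$-open via Theorem \ref{Thm 1}, and apply the irresoluteness hypothesis to conclude $\psi^{-1}(\mathcal{O})$ is nano $h$-open in $\mathcal{U}$. No gaps; nothing further to add.
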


\begin{proof}
	Consider a nano open set $\mathcal O$ in $\mathcal V$. By Theorem \ref{Thm 1}, $\mathcal O$ is nano $h$-open set in $\mathcal V$. By hypothesis, $\psi^{-1}(\mathcal O)$ is nano $h$-open set in $\mathcal U$. This completes the proof.
\end{proof}

\begin{remark}
	Converse of Theorem \ref{Thm 6} need not be true, as we can see in Example \ref{Ex 3}, $\{2\}$ is nano $h$-open in $\mathcal V$ but $\psi^{-1}(\{2\})=\{b\}$ is not nano $h$-open in $\mathcal U$. 
\end{remark}

\begin{definition}
	Consider two nano topological spaces $(\mathcal{U}, \tau_{\mathcal{R}}(\mathcal{X}))$ and $(\mathcal{V}, \tau_{\mathcal{R}^{'}}(\mathcal{Y}))$. Then a mapping $\psi: (\mathcal{U}, \tau_{\mathcal{R}}(\mathcal{X})) \rightarrow (\mathcal{V}, \tau_{\mathcal{R}^{'}}(\mathcal{Y}))$ is said to be nano $h$-homeomorphism if $\psi$ is bijective, nano $h$-continuous and nano $h$-open function.
\end{definition} 

\begin{example}
	In Example \ref{Ex 3}, we can see the identity function $\psi: (\mathcal{U}, \tau_{\mathcal{R}}(\mathcal{X})) \rightarrow (\mathcal{V}, \tau_{\mathcal{R}^{'}}(\mathcal{Y}))$ is bijective, nano $h$-continuous and nano $h$-open function and hence, nano $h$-homeomorphism. 
\end{example}

\begin{theorem} \label{Thm 7}
	Suppose a mapping $\psi: (\mathcal{U}, \tau_{\mathcal{R}}(\mathcal{X})) \rightarrow (\mathcal{V}, \tau_{\mathcal{R}^{'}}(\mathcal{Y}))$ is nano homeomorphism. Then $\psi$ is nano $h$-homeomorphism.
\end{theorem}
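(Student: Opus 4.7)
The plan is to unpack the definition of nano homeomorphism and match each of its three ingredients to the three ingredients required by the definition of nano $h$-homeomorphism. By hypothesis, $\psi$ is simultaneously bijective, nano continuous, and nano open, so I would immediately peel off these three facts and treat them separately.

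First, bijectivity transfers verbatim: no additional work is needed since bijectivity is required by both definitions. Second, I would invoke Theorem \ref{Thm 3}, which asserts that every nano continuous function is nano $h$-continuous; applying it to $\psi$ yields the nano $h$-continuity of $\psi$. Third, I would invoke Theorem \ref{Thm 5}, which asserts that every nano open function is nano $h$-open; applying it to $\psi$ yields the nano $h$-openness of $\psi$.

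Combining these three outputs, $\psi$ satisfies all the conditions in the definition of nano $h$-homeomorphism, which concludes the proof. There is essentially no obstacle here — the statement is a direct corollary of Theorems \ref{Thm 3} and \ref{Thm 5} — so the only care required is to make explicit which theorem is being cited for which implication and to note that bijectivity is preserved trivially.
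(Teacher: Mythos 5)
Your proposal is correct and follows exactly the same route as the paper, which simply states that the result follows from Theorems \ref{Thm 3} and \ref{Thm 5}; you have merely spelled out the trivial details (bijectivity carries over, Theorem \ref{Thm 3} handles continuity, Theorem \ref{Thm 5} handles openness). No issues.
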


\begin{proof}
	Proof follows trivially from Theorem \ref{Thm 3} and \ref{Thm 5}.
\end{proof}

\begin{remark}
	Converse of Theorem \ref{Thm 7} need not be true, as we can see in Example \ref{Ex 3}, $\{a\}$ is nano open in $\mathcal U$ but $\psi(\{a\})=\{1\}$ is not nano open in $\mathcal V$. Thus $\psi: (\mathcal{U}, \tau_{\mathcal{R}}(\mathcal{X})) \rightarrow (\mathcal{V}, \tau_{\mathcal{R}^{'}}(\mathcal{Y}))$  is nano $h$-open but not nano open. Therefore, we conclude that $\psi: (\mathcal{U}, \tau_{\mathcal{R}}(\mathcal{X})) \rightarrow (\mathcal{V}, \tau_{\mathcal{R}^{'}}(\mathcal{Y}))$ is nano $h$-homeomorphism but not nano homeomorphism.
\end{remark}

\begin{definition}
	Consider two nano topological spaces $(\mathcal{U}, \tau_{\mathcal{R}}(\mathcal{X}))$ and $(\mathcal{V}, \tau_{\mathcal{R}^{'}}(\mathcal{Y}))$. Then a mapping $\psi: (\mathcal{U}, \tau_{\mathcal{R}}(\mathcal{X})) \rightarrow (\mathcal{V}, \tau_{\mathcal{R}^{'}}(\mathcal{Y}))$ is said to be nano $h$-totally continuous if $\psi^{-1}(\mathcal{O})$ is nano clopen in $\mathcal{U}$ for every nano $h$-open set $\mathcal{O}$ in $\mathcal{V}$.
\end{definition}

\begin{example}
	Let $\mathcal U= \mathcal V =\{a,b,c\}$ with $\mathcal U \slash \mathcal R = \{ \{a\}, \{b,c \} \}$, $\mathcal X =\{a,c\}$. Then $\tau_{\mathcal R}(\mathcal X)= \{ \phi, \mathcal U, \{a\}, \{b,c\} \}$. Let $ \mathcal V \slash \mathcal R^{'}= \{ \{a\}, \{b\}, \{c\} \}$, $\mathcal Y= \{a\}$. Then $\tau_{\mathcal R^{'}}(\mathcal Y)=\{ \phi, \mathcal V, \{a\} \}$, $\tau_{\mathcal R^{'}}^{h}(\mathcal Y)=\{ \phi, \mathcal V, \{a\}, \{b,c\} \}$. Define $\psi: (\mathcal{U}, \tau_{\mathcal{R}}(\mathcal{X})) \rightarrow (\mathcal{V}, \tau_{\mathcal{R}^{'}}(\mathcal{Y}))$ as an identity function. Then $\psi$ is nano $h$-totally continuous. 
\end{example}

\begin{theorem} \label{Thm 8}
	Suppose a mapping $\psi: (\mathcal{U}, \tau_{\mathcal{R}}(\mathcal{X})) \rightarrow (\mathcal{V}, \tau_{\mathcal{R}^{'}}(\mathcal{Y}))$ is nano $h$-totally continuous. Then $\psi$ is nano totally continuous.
\end{theorem}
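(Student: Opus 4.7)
The plan is to invoke Theorem \ref{Thm 1} applied in the codomain $\mathcal{V}$ to convert a nano open set into a nano $h$-open set, and then just feed it into the hypothesis. Concretely, I would start with an arbitrary nano open set $\mathcal{O}$ in $\mathcal{V}$, observe via Theorem \ref{Thm 1} that $\mathcal{O}$ is nano $h$-open in $\mathcal{V}$, and then apply the definition of nano $h$-totally continuous to conclude that $\psi^{-1}(\mathcal{O})$ is nano clopen in $\mathcal{U}$. Since $\mathcal{O}$ was an arbitrary nano open set, this is exactly the definition of $\psi$ being nano totally continuous.

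There is really no obstacle here; the statement is a one-line corollary of Theorem \ref{Thm 1} together with the two definitions. The only subtlety worth checking is that Theorem \ref{Thm 1} is phrased for a generic nano topological space, so it applies equally well to $(\mathcal{V}, \tau_{\mathcal{R}'}(\mathcal{Y}))$ and gives the inclusion $\tau_{\mathcal{R}'}(\mathcal{Y}) \subseteq \tau_{\mathcal{R}'}^{h}(\mathcal{Y})$. Once this is noted, the implication is immediate, which is presumably why the authors will present the proof in just a couple of lines.
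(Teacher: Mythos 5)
Your proposal is correct and matches the paper's own proof exactly: take a nano open set $\mathcal{O}$ in $\mathcal{V}$, use Theorem \ref{Thm 1} to see it is nano $h$-open, and apply the hypothesis to get that $\psi^{-1}(\mathcal{O})$ is nano clopen in $\mathcal{U}$. Nothing further is needed.
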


\begin{proof}
	To prove  $\psi$ is nano totally continuous, let $\mathcal O$ be any nano open set in $\mathcal V$. By Theorem \ref{Thm 1}, $\mathcal O$ is nano $h$-open in $\mathcal V$. Also, by hypothesis, $\psi^{-1}(\mathcal{O})$ is nano clopen in $\mathcal{U}$. Hence, the proof.
\end{proof}

\begin{remark}
	Converse of Theorem \ref{Thm 8} need not be true. It can be illustrated by the following example:
\end{remark}

\begin{example}
	Let $\mathcal U= \mathcal V =\{a,b,c\}$ with $\mathcal U \slash \mathcal R = \{ \{a\}, \{b,c \} \}$, $\mathcal X =\{a,c\}$. Then $\tau_{\mathcal R}(\mathcal X)= \{ \phi, \mathcal U, \{a\}, \{b,c\} \}$. Let $ \mathcal V \slash \mathcal R^{'}= \{ \{a\}, \{b,c \} \}$, $\mathcal Y= \{b,c\}$. Then $\tau_{\mathcal R^{'}}(\mathcal Y)=\{ \phi, \mathcal V, \{b,c\} \}$. Define $\psi: (\mathcal{U}, \tau_{\mathcal{R}}(\mathcal{X})) \rightarrow (\mathcal{V}, \tau_{\mathcal{R}^{'}}(\mathcal{Y}))$ as an identity function. Then $\psi$ is nano totally continuous but not nano $h$-totally continuous.
\end{example}

\begin{theorem} \label{Thm 9}
	Suppose a mapping $\psi: (\mathcal{U}, \tau_{\mathcal{R}}(\mathcal{X})) \rightarrow (\mathcal{V}, \tau_{\mathcal{R}^{'}}(\mathcal{Y}))$ is nano $h$-totally continuous. Then $\psi$ is nano $h$-irresolute.
\end{theorem}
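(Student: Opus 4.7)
The plan is a direct unpacking of the two definitions combined with a single invocation of Theorem \ref{Thm 1}. I would start by fixing an arbitrary nano $h$-open set $\mathcal{O}$ in $\mathcal{V}$, since the conclusion (nano $h$-irresolute) is exactly a statement about preimages of such sets.

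Next I would apply the hypothesis that $\psi$ is nano $h$-totally continuous to this $\mathcal{O}$. By definition this gives that $\psi^{-1}(\mathcal{O})$ is nano clopen in $\mathcal{U}$, that is, both nano open and nano closed. I only need the nano open half here. Then Theorem \ref{Thm 1} upgrades this: every nano open set in $\mathcal{U}$ is nano $h$-open, so $\psi^{-1}(\mathcal{O})$ is nano $h$-open in $\mathcal{U}$. Since $\mathcal{O}$ was an arbitrary nano $h$-open subset of $\mathcal{V}$, this is exactly the definition of $\psi$ being nano $h$-irresolute, completing the argument.

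There is no real obstacle: the implication \emph{nano clopen} $\Rightarrow$ \emph{nano open} $\Rightarrow$ \emph{nano $h$-open} is purely definitional combined with Theorem \ref{Thm 1}, and the quantifier in the hypothesis of nano $h$-total continuity (\emph{for every nano $h$-open set $\mathcal{O}$ in $\mathcal{V}$}) matches the quantifier in the conclusion of nano $h$-irresoluteness exactly, so no extra bookkeeping is needed. The proof should fit in two or three lines.
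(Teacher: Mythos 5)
Your proposal is correct and follows exactly the same route as the paper's proof: fix a nano $h$-open set in $\mathcal{V}$, use nano $h$-total continuity to get a nano clopen (hence nano open) preimage, and apply Theorem \ref{Thm 1} to conclude it is nano $h$-open. No differences worth noting.
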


\begin{proof}
	To prove  $\psi$ is nano $h$-irresolute, let $\mathcal O$ be any nano $h$-open set in $\mathcal V$. By hypothesis, $\psi^{-1}(\mathcal{O})$ is nano clopen in $\mathcal{U}$. By Theorem \ref{Thm 1}, $\psi^{-1}(\mathcal O)$ is nano $h$-open in $\mathcal U$. Hence, the proof.
\end{proof}

\begin{remark}
	Converse of Theorem \ref{Thm 9} need not be true, as we can see in Example \ref{Ex 5}, $\{a,c\}$ is nano $h$-open in $\mathcal V$ but $\psi^{-1}(\{a,c\})=\{a,c\}$ is not nano clopen in $\mathcal U$.
\end{remark}

\begin{definition}
	Consider two nano topological spaces $(\mathcal{U}, \tau_{\mathcal{R}}(\mathcal{X}))$ and $(\mathcal{V}, \tau_{\mathcal{R}^{'}}(\mathcal{Y}))$. Then a mapping $\psi: (\mathcal{U}, \tau_{\mathcal{R}}(\mathcal{X})) \rightarrow (\mathcal{V}, \tau_{\mathcal{R}^{'}}(\mathcal{Y}))$ is said to be nano $h$-contra continuous if $\psi^{-1}(\mathcal{O})$ is nano $h$-closed in $\mathcal{U}$ for every nano open set $\mathcal{O}$ in $\mathcal{V}$.
\end{definition}

\begin{example} \label{Ex 6}
	Let $\mathcal U=\{a,b,c \}$ with $\mathcal U \slash \mathcal R= \{ \{a\}, \{b\}, \{c\}\}$, $\mathcal X= \{b,c\}$. Then $\tau_{\mathcal R}(\mathcal X)= \{ \phi, \mathcal U, \{b,c\}\}$, $\tau_{\mathcal R}^{h}(\mathcal X) = \mathcal P(\mathcal U)$. Let $\mathcal V= \{1,2,3\}$, $\mathcal V \slash \mathcal R^{'}=\{ \{1\}, \{2\}, \{3\} \}$, $\mathcal Y= \{2\}$. Then $\tau_{\mathcal R^{'}}(\mathcal Y)= \{ \phi, \mathcal V, \{2\} \}$. Define  $\psi: (\mathcal{U}, \tau_{\mathcal{R}}(\mathcal{X})) \rightarrow (\mathcal{V}, \tau_{\mathcal{R}^{'}}(\mathcal{Y}))$ as $\psi(\{a\})=\{1\}, \psi(\{b\})=\{2\}$ and $\psi(\{c\})=\{3\}$. Clearly, $\psi$ is nano $h$- contra continuous.
\end{example}

\begin{theorem} \label{Thm 10}
	Suppose a mapping $\psi: (\mathcal{U}, \tau_{\mathcal{R}}(\mathcal{X})) \rightarrow (\mathcal{V}, \tau_{\mathcal{R}^{'}}(\mathcal{Y}))$ is nano contra continuous. Then $\psi$ is nano $h$-contra continuous.
\end{theorem}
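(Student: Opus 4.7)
The plan is to chain together two facts: the hypothesis that $\psi$ is nano contra continuous, and the observation (a direct complement-dual of Theorem \ref{Thm 1}) that every nano closed set is nano $h$-closed. Given a nano open set $\mathcal{O}$ in $\mathcal{V}$, the hypothesis immediately gives that $\psi^{-1}(\mathcal{O})$ is nano closed in $\mathcal{U}$, and then the complement-dual of Theorem \ref{Thm 1} promotes this to nano $h$-closedness, which is exactly what the conclusion demands.

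Concretely I would first fix an arbitrary nano open set $\mathcal{O}\subseteq\mathcal{V}$. By the definition of nano contra continuity (item (v) of the preliminaries), $\psi^{-1}(\mathcal{O})$ is nano closed in $\mathcal{U}$. Next I would invoke the complement of Theorem \ref{Thm 1}: since every nano open subset of $\mathcal{U}$ is nano $h$-open, taking complements shows that every nano closed subset of $\mathcal{U}$ is nano $h$-closed. Applying this to $\psi^{-1}(\mathcal{O})$ yields that $\psi^{-1}(\mathcal{O})$ is nano $h$-closed in $\mathcal{U}$. Since $\mathcal{O}$ was arbitrary, $\psi$ satisfies the defining property of a nano $h$-contra continuous mapping.

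There is essentially no obstacle here; the argument is a one-line composition. The only minor point that is worth stating explicitly (rather than leaving implicit) is the complement-dual of Theorem \ref{Thm 1}: if $\mathcal{K}$ is nano closed then $\mathcal{U}\setminus\mathcal{K}$ is nano open, hence nano $h$-open by Theorem \ref{Thm 1}, hence $\mathcal{K}$ is nano $h$-closed by the definition of nano $h$-closedness as the complement of a nano $h$-open set. I would write the proof as two sentences — one invoking nano contra continuity, one invoking Theorem \ref{Thm 1} via complements — and conclude.
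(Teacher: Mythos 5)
Your proposal is correct and follows essentially the same route as the paper's own proof: take a nano open set $\mathcal{O}$ in $\mathcal{V}$, use nano contra continuity to get that $\psi^{-1}(\mathcal{O})$ is nano closed, and then pass to nano $h$-closedness. The only difference is that you spell out the complement-dual of Theorem \ref{Thm 1} (nano closed implies nano $h$-closed), a step the paper leaves implicit with a bare ``Thus''; making it explicit is a harmless improvement.
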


\begin{proof}
	To prove  $\psi$ is nano $h$-contra continuous, let $\mathcal O$ be any nano open set in $\mathcal V$. By hypothesis, $\psi^{-1}(\mathcal{O})$ is nano closed in $\mathcal{U}$. Thus $\psi^{-1}(\mathcal{O})$ is nano $h$-closed in $\mathcal{U}$. Hence, the proof.
\end{proof}

\begin{remark}
	Converse of Theorem \ref{Thm 10} need not be true, as we can see in Example \ref{Ex 6}, $\{2\}$ is nano open in $\mathcal V$ but $\psi^{-1}(\{2\})=\{b\}$ is not nano closed in $\mathcal U$.
\end{remark}

\begin{theorem} \label{Thm 11}
	Suppose a mapping $\psi: (\mathcal{U}, \tau_{\mathcal{R}}(\mathcal{X})) \rightarrow (\mathcal{V}, \tau_{\mathcal{R}^{'}}(\mathcal{Y}))$ is nano totally continuous. Then $\psi$ is nano $h$-contra continuous.
\end{theorem}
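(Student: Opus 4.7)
The plan is to chain together two simple facts already on the table: (i) nano total continuity hands us a nano clopen preimage, and (ii) every nano closed set is nano $h$-closed.

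First I would let $\mathcal{O}$ be an arbitrary nano open set in $\mathcal{V}$ and use the hypothesis that $\psi$ is nano totally continuous to conclude that $\psi^{-1}(\mathcal{O})$ is nano clopen in $\mathcal{U}$; in particular, $\psi^{-1}(\mathcal{O})$ is nano closed in $\mathcal{U}$.

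Next I would upgrade ``nano closed'' to ``nano $h$-closed''. This is where Theorem \ref{Thm 1} does the work: writing $\psi^{-1}(\mathcal{O}) = \mathcal{U} \setminus \mathcal{W}$ for some nano open $\mathcal{W}$ in $\mathcal{U}$, Theorem \ref{Thm 1} tells us $\mathcal{W}$ is nano $h$-open, so by the very definition of nano $h$-closed its complement $\psi^{-1}(\mathcal{O})$ is nano $h$-closed in $\mathcal{U}$.

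Since this holds for every nano open $\mathcal{O}$ in $\mathcal{V}$, $\psi$ satisfies the definition of nano $h$-contra continuity. There is no real obstacle here beyond quoting Theorem \ref{Thm 1} correctly; the argument is a one-line corollary of nano total continuity plus the nano open $\Rightarrow$ nano $h$-open implication, so the bulk of the write-up is just spelling out the two sentences above.
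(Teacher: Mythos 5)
Your proposal is correct and matches the paper's own proof: both take a nano open set $\mathcal{O}$ in $\mathcal{V}$, use nano total continuity to get that $\psi^{-1}(\mathcal{O})$ is nano clopen, and then invoke Theorem \ref{Thm 1} (nano open $\Rightarrow$ nano $h$-open, hence nano closed $\Rightarrow$ nano $h$-closed) to conclude. Your write-up merely spells out the complementation step that the paper leaves implicit.
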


\begin{proof}
	To prove  $\psi$ is nano $h$-contra continuous, let $\mathcal O$ be any nano open set in $\mathcal V$. By hypothesis, $\psi^{-1}(\mathcal{O})$ is nano clopen in $\mathcal{U}$. Thus $\psi^{-1}(\mathcal{O})$ is nano $h$-closed in $\mathcal{U}$. Hence, the proof.
\end{proof}

\begin{remark}
	Converse of Theorem \ref{Thm 11} need not be true, as we can see in Example \ref{Ex 6}, $\{2\}$ is nano open in $\mathcal V$ but $\psi^{-1}(\{2\})=\{b\}$ is not nano clopen in $\mathcal U$.
\end{remark}

\end{document}